\newtheorem*{thm*}{Theorem}
\newtheorem*{conj*}{Conjecture}
\newtheorem*{remark}{Remark}
\newtheorem{theorem}{Theorem}[section]
\newtheorem{proposition}[theorem]{Proposition}
\newtheorem*{example}{Example}
\newcommand{\CL}{\mathrm{CL}}
\newcommand{\Z}{\mathbb{Z}}
\newcommand{\Q}{\mathbb{Q}}
\newcommand{\R}{\mathbb{R}}
\newcommand{\SL}{\operatorname{SL}}
\newcommand{\tor}{\mathrm{tor}}
\newcommand{\Vol}{\operatorname{Vol}}
\numberwithin{equation}{section}
\begin{document}
\title{Elliptic curves and lower bounds for class numbers}
\author{Michael Griffin and Ken Ono}
\address{Department of Mathematics, University of Virginia, Charlottesville, VA 22904}
\email{ken.ono691@virginia.edu}
\address{Department of Mathematics, 275 TMCB, Brigham Young University, Provo, UT 84602}
\email{mjgriffin@math.byu.edu}

\begin{abstract} Ideal class pairings map the rational points of rank $r\geq 1$ elliptic curves $E/\Q$ 
to the ideal class groups $\CL(-D)$ of certain  imaginary quadratic fields.
These pairings imply that
 $$h(-D) \geq \frac{1}{2}(c(E)-\varepsilon)(\log D)^{\frac{r}{2}}
$$
for sufficiently large discriminants $-D$ in certain families, where $c(E)$ is a natural constant.
These bounds are effective, and they offer improvements to known  lower bounds for many discriminants.
\end{abstract}


\maketitle
\section{Introduction and statement of results}\label{Intro}

Estimating class numbers $h(-D)$  of imaginary quadratic fields
 $\Q(\sqrt{-D}),$ which also count equivalence classes of
  integral  positive definite binary quadratic forms of fundamental discriminant $-D$, is one of the oldest problems in
number theory. Gauss conjectured that $h(-D)\rightarrow +\infty$ as $D\rightarrow \infty$. Heilbronn \cite{Heilbronn} confirmed this  in the 1930s, and Siegel \cite{Siegel} shortly thereafter obtained a nearly definitive solution. For $\varepsilon>0,$ he proved that there are constants $c_1(\varepsilon),
c_2(\varepsilon)>0$ for which
$$
c_1(\varepsilon) D^{\frac{1}{2}-\varepsilon} \leq h(-D) \leq c_2(\varepsilon)D^{\frac{1}{2}+\varepsilon}.
$$
Siegel's lower bound is inexplicit;
there is no known formula for $c_1(\varepsilon)$. 
Therefore, his work could not even determine the class number 1 fundamental discriminants.  
Baker, Heegner, and Stark  \cite{Baker,  Heegner, Stark}  later famously determined this list:  $-D\in \{-3,-4,-7,-8,-11,-19,-43,-67,-163\}.$

Thanks to work of Goldfeld, Gross and Zagier, modified and  ingeniously optimized by Watkins, such lists are now known \cite{Watkins} for
  $h(-D)\leq 100$.
 The deep theorem
of Goldfeld \cite{Goldfeld1}, published in 1976, offered effective class number lower bounds assuming the existence of an elliptic curve $E/\Q$ with analytic rank $r\geq 3$. Groundbreaking work by Gross and Zagier \cite{Goldfeld2,GrossZagier} on the Birch and Swinnerton-Dyer Conjecture  confirmed the existence of such curves ten years later, resulting in the effective lower bound
 \cite{Oesterle}
\begin{equation}\label{GGZ}
h(-D) >
\frac{1}{7000}\left(\log D\right) \prod_{\substack{p\mid D\ {\text {\rm prime}}\\ p\neq D}} \left(
1-\frac{[2\sqrt{p}]}{p+1}\right).
\end{equation}

Here we obtain effective lower bounds for certain discriminants using elliptic curves in a completely different way.
Although we do not improve on (\ref{GGZ}) for all $-D$, the point of this note is to highlight and make use of an interesting interrelationship between class groups and elliptic curves
that often leads to improved class number lower bounds.
We employ
{\it  ideal class pairings}, maps of the form
 $$
 E(\Q)\times E_{-D}(\Q)\rightarrow \CL(-D),
 $$
 where $E_{-D}$ is the $-D$-quadratic twist of  $E$. Such maps
 were previously considered by Buell, Call, and Soleng \cite{Buell, BuellCall, Soleng}.
 The idea is quite natural. 
  Throughout, suppose that $E/\Q$ is given by
\begin{equation}\label{EModel}
E: \ \ y^2 = x^3 +a_4x +a_6,
\end{equation}
where $a_4, a_6\in \Z$, with $j$-invariant $j(E)$ and discriminant $\Delta(E)$, and
suppose that $E(\Q)$ has rank $r=r(E)\geq 1$.
If $-D<0$ is a fundamental discriminant, then let $E_{-D}/\Q$ be its quadratic twist\footnote{For reasons which will become apparent later, we choose this nonstandard normalization.}
\begin{equation}\label{twist}
E_{-D}: \  -D\cdot \left (\frac{y}{2}\right )^2=\, x^3 +a_4x+a_6.
\end{equation}
Furthermore, suppose that {\color{black}
$Q_D=({u},{v})\in E_{-D}(\Q)$ is an integer point,} where
 $v\neq 0$, with $v$ even if $-D$ is odd.\footnote{
 Goldfeld conjectures
 \cite{GoldfeldConjecture}
  that  asymptotically half of the $E_{-D}$ have rank 1, and so such points are plentiful. The integrality of $Q_D$
  is easily satisfied by changing models of $E$ and $E_{-D}$ by clearing denominators if necessary.}
Theorem~\ref{ThmQF} gives an explicit construction of the pairing. Therefore, the number of $\SL_2(\Z)$-inequivalent forms obtained by
 pairing  points in $E(\Q)$ with $Q_D$ gives a lower bound for $h(-D)$.

 We derive lower bounds  in terms of $\Omega_r:=\pi^{\frac{r}{2}}/\Gamma\left (\frac{r}{2}+1\right)$, the volume of the $\R^r$-unit ball, the regulator $R_{\Q}(E)$,  the diameter $d(E)$ (see (\ref{diameter})), and the torsion subgroup $E_{\tor}(\Q).$ We define
\begin{equation}
c(E):=\frac{|E_{\tor}(\Q)|}{\sqrt{R_{\Q}(E)}}\cdot \Omega_r,
\end{equation}
and, in terms of the usual logarithmic heights  (see Section~\ref{Heights}) of $j(E)$ and $\Delta(E),$ we define
\begin{equation}
\delta(E):={\color{black} \frac{1}{8}h_{W}(j(E))+\frac{1}{12}h_{W}(\Delta(E))+\frac{5}{3}}.
\end{equation}
Finally, to facilitate the comparison with $\log(D)$, we define
\begin{equation}\label{Tconstant}
T_E(D,Q_D):=\tfrac14\log\left(\frac{D}{(1+|u|)^2} \right)-\delta(E).
\end{equation}

\begin{theorem}\label{General}
Assuming the hypotheses above, if
{\color{black}$(1+|u|)^2\exp(4\delta(E)+d(E))<D\leq \frac{(1+|u|)^2u^2}{v^4}$}, then
$$
h(-D)\geq \frac{c(E)}{2}\cdot \left(T_E(D,Q_D)^{\frac{r}{2}}-r\sqrt{d(E)}\cdot  T_E(D,Q_D)^{\frac{r-1}{2}}\right).
$$
\end{theorem}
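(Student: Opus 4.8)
Set $T:=T_E(D,Q_D)$. The plan is to use the ideal class pairing of Theorem~\ref{ThmQF} to attach to each $P\in E(\Q)$ a positive definite binary quadratic form $Q_P$ of discriminant $-D$, obtained by pairing $P$ against the fixed point $Q_D$, and to bound $h(-D)$ below by the number of distinct $\SL_2(\Z)$-classes $[Q_P]$ that occur; as these classes all lie in $\CL(-D)$ and $h(-D)=|\CL(-D)|$, any count of distinct $[Q_P]$ is a lower bound for $h(-D)$. I would run the argument through the N\'eron--Tate canonical height $\hat h$, which realizes $E(\Q)/E_{\tor}(\Q)$ as a rank-$r$ lattice $\Lambda$ in the Euclidean space $(\R^r,\hat h)$ of covolume $\sqrt{R_\Q(E)}$, and count the points of $\Lambda$ in a suitable ball.

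The crux, and the step I expect to be the main obstacle, is to prove that $P\mapsto[Q_P]$ is injective modulo the relation $P\sim-P$ on the set $\{P:\hat h(P)\le T\}$. This needs the explicit shape of $Q_P$ from Theorem~\ref{ThmQF}: its leading coefficient $a$ is comparable to the denominator of $x(P)$, hence to $\exp\!\big(2\hat h(P)\big)$ up to the bounded discrepancy between $2\hat h(P)$ and the Weil height $h_W(x(P))$. That discrepancy is exactly what $h_W(j(E))$ and $h_W(\Delta(E))$ in $\delta(E)$ control, and it is what turns the reduction threshold $a\le\sqrt{D/3}$ into the clean ceiling $\hat h(P)\le\tfrac14\log\!\big(D/(1+|u|)^2\big)-\delta(E)=T$, explaining both the factor $\tfrac14$ and the shift by $\delta(E)$. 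The two hypotheses on $D$ play complementary roles: the upper bound $D\le(1+|u|)^2u^2/v^4$ is the admissibility constraint (a smallness condition on $Q_D$) under which Theorem~\ref{ThmQF} produces genuine reduced forms, while the ceiling $\hat h(P)\le T$ forces $a\le\sqrt{D/3}$, so each $Q_P$ is reduced. Since a reduced form is the unique reduced representative of its class, from $[Q_P]$ one recovers $a$, hence the denominator of $x(P)$, hence $P$ up to sign; thus distinct lattice points, taken modulo $\pm$, give distinct classes.

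It remains to estimate $N:=\#\{P\in E(\Q):\hat h(P)\le T\}$, a geometry-of-numbers count in the ball $B$ of radius $\sqrt T$ of $(\R^r,\hat h)$. Writing $\mu$ for the covering radius of $\Lambda$, every point of the concentric ball of radius $\sqrt T-\mu$ lies within $\mu$ of some $\lambda\in\Lambda\cap B$, so comparing covering volume with covolume yields
$$
N\ \ge\ \frac{\Vol\!\left(B_{\sqrt T-\mu}\right)}{\sqrt{R_\Q(E)}}\ =\ \frac{\Omega_r}{\sqrt{R_\Q(E)}}\left(\sqrt T-\mu\right)^{r}.
$$
The quantity $d(E)$ of (\ref{diameter}) bounds the squared diameter of a fundamental cell, so $\mu\le\tfrac12\sqrt{d(E)}$; the lower hypothesis on $D$ rearranges exactly to $4T>d(E)$, which gives $\mu\le\tfrac12\sqrt{d(E)}<\sqrt T$, so the shrunken ball is nonempty.

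Assembling the pieces completes the argument. Bernoulli's inequality gives $(\sqrt T-\mu)^{r}\ge T^{r/2}-r\mu\,T^{(r-1)/2}\ge T^{r/2}-r\sqrt{d(E)}\,T^{(r-1)/2}$, the last step using $\mu\le\sqrt{d(E)}$. Multiplying the bound for $N$ by $|E_{\tor}(\Q)|$, which accounts for the torsion translates of each lattice point, and by the factor $\tfrac12$ from the $P\leftrightarrow-P$ collapse, converts it into
$$
h(-D)\ \ge\ \frac{c(E)}{2}\left(T^{r/2}-r\sqrt{d(E)}\,T^{(r-1)/2}\right),
$$
which is the assertion upon recalling $T=T_E(D,Q_D)$.
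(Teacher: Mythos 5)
Your geometry-of-numbers half is essentially the paper's argument (the paper tiles by half-open fundamental parallelepipeds rather than invoking the covering radius, but the shrunken-ball-plus-Bernoulli estimate and the multiplication by $|E_{\tor}(\Q)|$ are the same), and your overall architecture --- pair against the fixed $Q_D$, count height-bounded points, divide by $2$ for $P\leftrightarrow -P$ --- matches the paper. The problem is in the step you yourself flag as the crux, the injectivity of $P\mapsto [F_{P,Q_D}]$ modulo sign, which you do not actually carry out.

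Two concrete issues. First, the leading coefficient of $F_{P,Q_D}$ is not the denominator of $x(P)$: for $P=(A/C^2,B/C^3)$ and the integral point $Q_D=(u,v)$ it is $\alpha/G$ with $\alpha=|A-uC^2|$, which mixes $A$ and $C^2$ into a single integer; and the forms are not reduced (the middle coefficient, involving the auxiliary $\ell$, is not controlled), so "recover $a$ from the class, hence the denominator, hence $P$ up to sign" breaks at every link. The correct statement available from Theorem~\ref{ThmQF} is only the dichotomy: equivalence forces $\alpha_1/G_1=\alpha_2/G_2$ or $\alpha_1\alpha_2/(G_1G_2)>D/4$, and the height ceiling rules out the second alternative. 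One must then still deduce $P_1=\pm P_2$ from the single equation $\alpha_1/G_1=\alpha_2/G_2$. Second, this is precisely where the hypothesis $D\leq (1+|u|)^2u^2/v^4$ is used --- not, as you assert, to make Theorem~\ref{ThmQF} "produce genuine reduced forms" (that theorem needs no such condition). It yields $|A_i|\leq H(P_i)\leq |u|/(2v^2)\leq |u|/2$, so that $A_i-uC_i^2$ has the sign of $-uC_i^2$ for both $i$; equality of the leading coefficients then gives $A_1G_2-A_2G_1=u(C_1^2G_2-C_2^2G_1)$, whose left side is divisible by $u$ yet too small in absolute value to be a nonzero multiple of $u$, forcing $A_1/G_1=A_2/G_2$ and $C_1^2/G_1=C_2^2/G_2$, hence $x(P_1)=x(P_2)$ and $P_1=\pm P_2$. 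Without this divisibility argument (or some substitute) the injectivity claim is unsupported, and with the role of the upper bound on $D$ misassigned your proof has no mechanism to separate distinct points whose leading coefficients happen to coincide.
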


Although the hypotheses for Theorem~\ref{General} are satisfied by many $E/\Q$ for each $-D$,
we seek choices
that  improve (\ref{GGZ}).  In view of (\ref{Tconstant}), we require choices where the
height of $Q_D$ is small and where $c(E)$ is not too small. 
For each $E$, we offer a natural family of discriminants, those of the form
$-D_E(t):=-4(t^3+a_4t-a_6),$ with $t\in \Z$. In these cases, we choose $Q_{-D_E(t)}:=(-t,1)$.

\begin{theorem}\label{MainThm}
If $\varepsilon>0,$ then there is an effectively computable constant $N(E,\varepsilon)<0$ 
such that for negative fundamental discriminant of the form $-D_E(t)$, where $t\in \Z$ and
$-D_E(t)<N(E,\varepsilon),$ 
we have
$$
h(-D_E(t)) \geq \frac{1}{2} \left (\frac{c(E)}{\sqrt{12^r}}-\varepsilon\right)\cdot \log(D_E(t))^{\frac{r}{2}}.
$$
\end{theorem}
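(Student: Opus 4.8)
The plan is to apply Theorem~\ref{General} directly to the family $D=D_E(t)$ with the prescribed point $Q_{-D_E(t)}=(-t,1)$, and then to extract the stated asymptotic in $\log D_E(t)$. First I would record that $(-t,1)$ really lies on $E_{-D_E(t)}$: substituting $x=-t$, $y=1$, and $-D=-4(t^3+a_4t-a_6)$ into the twist model (\ref{twist}) gives $-D/4=-(t^3+a_4t-a_6)$ on both sides, so this is a genuine integer point with $u=-t$ and $v=1$. Since $D_E(t)=4(t^3+a_4t-a_6)$ is divisible by $4$, the discriminant $-D_E(t)$ is even, and hence the parity requirement ($v$ even when $-D$ is odd) is vacuous.

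Next I would verify that the two hypotheses of Theorem~\ref{General} hold once $t$ (equivalently $D_E(t)$) is large. With $u=-t$ and $v=1$ we have $(1+|u|)^2=(1+|t|)^2$ and $(1+|u|)^2u^2/v^4=(1+|t|)^2t^2$. As $t\to+\infty$ one has $D_E(t)=4t^3(1+o(1))$, so the lower hypothesis $(1+|t|)^2\exp(4\delta(E)+d(E))<D_E(t)$ holds because its left side grows like $t^2$ while $D_E(t)$ grows like $t^3$, and the upper hypothesis $D_E(t)\le (1+|t|)^2t^2=t^4(1+o(1))$ holds because $t^3=o(t^4)$. Both thresholds are explicit in $a_4,a_6,\delta(E),d(E)$, which matters for effectivity.

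The heart of the matter is the asymptotic evaluation of $T_E(D_E(t),Q_{-D_E(t)})$. From (\ref{Tconstant}) this equals $\tfrac14\log\!\big(D_E(t)/(1+|t|)^2\big)-\delta(E)$. Since $D_E(t)=4t^3(1+o(1))$ and $(1+|t|)^2=t^2(1+o(1))$, the ratio is $4t(1+o(1))$, so $\log\big(D_E(t)/(1+|t|)^2\big)=\log t+O(1)$ while $\log D_E(t)=3\log t+O(1)$. Hence
$$ T_E(D_E(t),Q_{-D_E(t)})=\tfrac1{12}\log D_E(t)+O(1), $$
which is exactly the source of the constant $\sqrt{12^r}$: raising to the power $r/2$ gives $T_E^{r/2}=12^{-r/2}(\log D_E(t))^{r/2}\big(1+O(1/\log D_E(t))\big)$, and the subtracted term satisfies $r\sqrt{d(E)}\,T_E^{(r-1)/2}=O\big((\log D_E(t))^{(r-1)/2}\big)$, of strictly lower order. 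Plugging into Theorem~\ref{General} and using $12^{r/2}=\sqrt{12^r}$,
$$ h(-D_E(t))\ge \tfrac12\cdot\frac{c(E)}{\sqrt{12^r}}\,(\log D_E(t))^{r/2}-O\big((\log D_E(t))^{(r-1)/2}\big), $$
where both the $O(1)$ shift inside $T_E$ (contributing an error of order $(\log D_E(t))^{r/2-1}$) and the correction term have been absorbed into a single error of order $(\log D_E(t))^{(r-1)/2}$. For fixed $\varepsilon>0$ this error is at most $\tfrac{\varepsilon}{2}(\log D_E(t))^{r/2}$ once $\log D_E(t)$ exceeds an explicit bound, which defines the threshold $N(E,\varepsilon)$ and yields the claim.

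I expect the main obstacle to be making the argument \emph{effective} rather than merely asymptotic: one must replace every $O(1)$ and $o(1)$ above by explicit functions of $a_4,a_6,\delta(E),d(E),c(E)$, and the passage from $T_E=\tfrac1{12}\log D_E(t)+O(1)$ to the bound on $T_E^{r/2}$ requires a careful binomial or mean-value estimate whose form differs slightly for $r=1$ (where $x\mapsto x^{1/2}$ is concave) versus $r\ge 2$. These steps are routine, but they are precisely where the effectively computable constant $N(E,\varepsilon)$ is actually produced.
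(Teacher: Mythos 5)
Your proposal is correct and follows essentially the same route as the paper: the authors prove the intermediate Theorem~\ref{Thm1}, which is precisely Theorem~\ref{General} specialized to $-D_E(t)$ with $Q_t=(-t,1)$, and then deduce Theorem~\ref{MainThm} from the asymptotic $T_E(t)\sim \log(D_E(t))/12$, exactly the computation you carry out. Your verification of the hypotheses and the error analysis simply make explicit the steps the paper compresses into a one-line deduction.
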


\begin{remark}
Theorems~\ref{General} and~\ref{MainThm}  are stated under the assumption that the discriminants are fundamental for reasons of aesthetics. There is a straightforward
modification that offers lower bounds for the class number of the corresponding imaginary quadratic field when $-D$ (resp. $-D_E(t)$) is not fundamental.  We note that Theorem~\ref{ThmQF}, which is the source of the lower bounds, holds for all discriminants. Namely, the proof gives lower bounds for
the Hurwitz-Kronecker class number $H(-D)$ (for example, see p. 273 \cite{C}), the class number of discriminant $-D$ quadratic forms, which counts each class $C$ with multiplicity $1/{\text {\rm Aut}}(C).$ Thankfully, Hurwitz class numbers satisfy a particularly nice multiplicative formula relative to class numbers with fundamental discriminant.  If $-D=-D_0 f^2,$ where $-D_0$ is a negative fundamental discriminant, then
\begin{equation}\label{Hecke}
H(-D)=\frac{h(-D_0)}{\omega(-D_0)}\cdot \sum_{d\mid f}\mu(d)\chi_{-D_0}(d)\sigma_1(f/d),
\end{equation}
where $\mu(\cdot)$ is the M\"obius function, $\omega(-D_0)$ is half the number of units in $\Q(\sqrt{-D_0}),$ $\chi_{-D_0}(\cdot)$ is the corresponding Kronecker character, and $\sigma_1(n)$ is the sum of positive divisors of $n$. As a result, a lower bound for $H(-D)$ (resp.  $H(-D_E(t))$) leads to a lower bound for the class
number of the corresponding imaginary quadratic field.
\end{remark}

\begin{remark} 
A classical theorem of Hooley (see Ch. IV of \cite{Hooley}) gives asymptotic formulas for the number of square-free values
of irreducible cubic polynomials $f(t)\in \Z[t]$. Namely, it is generally the case (i.e. barring trivial obstructions arising from congruence conditions) that a positive proportion of the values of $f$ at integer arguments are square-free.
Using this fact we can quantify the frequency with which
Theorem~\ref{MainThm}  improves on (\ref{GGZ}) for large $-D_E(t)$ (i.e. $t\rightarrow +\infty$) when $r(E)\geq 3$. 
A famous example of Elkies \cite{Elkies} has $r(E)\geq 28,$ and so we obtain
the effective lower bound 
$$
h(-D)\gg_{\varepsilon} (\log D)^{14-\varepsilon}
$$
which holds for $\gg_{\varepsilon} X^{\frac{1}{3}}$ many explicit  fundamental discriminants $-X<-D<0$.
\end{remark}

\begin{example} For 
$
E: \  y^2 = x^3-16x+1,
$
we have\footnote{These calculations were performed using
\texttt{SageMath}.} $|E_{\tor}(\Q)|=1$, $r(E)=3$, and $R_E(\Q)\sim 0.930\dots$.
Therefore,   for large fundamental discriminants of the form
$-D_E(t)=-4(t^3-16t-1)$,  we have
$$
h(-D_E(t)) > \frac{1}{20}\cdot (\log(D_E(t))^{\frac{3}{2}}.
$$
\end{example}

We give infinite families of $E/\Q$ using the discriminant $\Delta_{a,b}:=-16(27b^{4}-4a^6)$ curves
\begin{equation}\label{E1mn}
E_{a,b} \ : \ y^2=x^3-a^2x+b^2.
\end{equation}
For  integers  $t$, we let
$D_{a,b}(t):=4(t^3-a^2t-b^2).$
For positive integers $a, b$, we let
\begin{equation}
c^{(2)}_{a,b}:=\frac{\Omega_2}{12\cdot \widehat{h}(P^{(2)}_{\max})} \ \ \  \ \ {\text {\rm and}}\ \ \ \ \ 
c^{(3)}_{a,b^3}:=\frac{\Omega_3}{24\sqrt{3}\cdot \widehat{h}(P^{(3)}_{\max})^{\frac{3}{2}}},
\end{equation}
where $P^{(2)}_{\max}\in \{(0,b), (-a,b)\}\subset E_{a,b}(\Q)$ and
$P^{(3)}_{\max}\in \{ (0,b^3), (-a,b^3), (-b^2,ab)\}\subset E_{a,b^3}(\Q)$
are chosen to have the largest canonical height.

\begin{theorem}\label{Thm2}
If $a$ and $b$ are positive integers,  then the following are true:
\begin{enumerate}
\item If $a\gg_b 1$ (resp. $b\gg_a 1$), then $r(E_{a,b}(\Q))\geq 2$. Moreover, if $\varepsilon>0,$ then for sufficiently large
fundamental discriminants $-D_{a,b}(t)<0$ in absolute value we have
$$
h(-D_{a,b}(t)) \geq  (c^{(2)}_{a,b}-\varepsilon)\cdot \log(D_{a,b}(t)).
$$

\item If $a\gg_b 1$ (resp. $b\gg_a 1$), then $r(E_{a,b^3}(\Q))\geq 3$. Moreover, if $\varepsilon>0,$ then for sufficiently large
fundamental discriminants $-D_{a,b^3}(t)<0$ in absolute value we have
$$
h(-D_{a,b^3}(t)) \geq  (c^{(3)}_{a,b^3}-\varepsilon)\cdot \log(D_{a,b^3}(t))^{\frac{3}{2}}.
$$
\end{enumerate}
\end{theorem}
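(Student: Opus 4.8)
The plan is to prove each part in two stages: a Mordell--Weil rank lower bound, followed by an application of the counting behind Theorem~\ref{General} to the discriminants $-D_{a,b}(t)$. For the rank I begin with the evident rational points: $E_{a,b}$ contains $(0,b)$ and $(-a,b)$, and $E_{a,b^3}$ contains $(0,b^3)$, $(-a,b^3)$, and $(-b^2,ab)$, which are exactly the candidates for $P^{(2)}_{\max}$ and $P^{(3)}_{\max}$. I would show these are independent---so that $r(E_{a,b})\ge 2$ and $r(E_{a,b^3})\ge 3$---for $a\gg_b 1$ (resp.\ $b\gg_a 1$) by specialization. Fixing $b$ and treating $a$ as a transcendental parameter, view $E_{a,b}$ as the generic fibre of an elliptic surface over $\mathbb{P}^1_a$, compute the function-field N\'eron--Tate height pairing of the sections, and check that its Gram matrix is nonsingular; Silverman's specialization theorem then makes the specialized points independent for all but finitely many $a\in\Z$, hence for $a\gg_b 1$. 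The regime $b\gg_a 1$ is symmetric with $b$ as the parameter, where for the rank-$3$ statement the section $(-b^2,ab)$ has non-constant $x$-coordinate and supplies the third independent direction.

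I expect this generic-independence step to be the crux. Direct naive-height bounds are too weak: for these points $\tfrac12 h(x(P))=O(\log a)$ while $h(\Delta_{a,b})\sim 6\log a$, so the error in the standard comparison $\widehat{h}(P)=\tfrac12 h(x(P))+O(h(\Delta))$ swamps the main term and cannot even certify non-torsion. Working over the function field removes every $O(1)$ error and turns the height pairing into an exact intersection computation, at the cost of having to identify the Kodaira types of the bad fibres (at the roots of $27b^4-4a^6$ and at infinity) and the components that each section meets, in order to evaluate the local correction terms. Carrying out this bookkeeping in both the ``$a$ large'' and ``$b$ large'' normalizations is where the real difficulty lies.

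For the class number bound I use that $-D_{a,b}(t)=-4(t^3-a^2t-b^2)$ is exactly the family $-D_E(t)$ of Theorem~\ref{MainThm} for $E=E_{a,b}$ (so $a_4=-a^2$, $a_6=b^2$), with $Q_{D}=(-t,1)\in E_{-D}(\Q)$; likewise for $E_{a,b^3}$. The mechanism of Theorem~\ref{General} counts inequivalent forms arising from points of $E(\Q)$ of canonical height at most $T_E(D,Q_D)$, and since $R_{\Q}(E)$ is unknown for these families I restrict that count to the sublattice $\Lambda$ generated by the explicit independent points. Restricting only decreases the count, so it remains a valid lower bound for $h(-D)$, now governed by the regulator $R_\Lambda$ in place of $R_{\Q}(E)$; bounding the covolume $\sqrt{R_\Lambda}$ against $\widehat{h}(P^{(2)}_{\max})$ (resp.\ $\widehat{h}(P^{(3)}_{\max})$) is what turns $\Omega_r/\sqrt{R_\Lambda}$ into the constants $c^{(2)}_{a,b}$ and $c^{(3)}_{a,b^3}$.

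It remains to insert the asymptotics in $t$. With $a,b$ fixed and $t\to\infty$ one has $D_{a,b}(t)\sim 4t^3$ and $(1+|u|)^2\sim t^2$, so $T_E(D,Q_D)\sim\tfrac1{12}\log D$ and $T_E^{r/2}\sim 12^{-r/2}(\log D)^{r/2}$; this is $\tfrac1{12}\log D$ for $r=2$ and $\tfrac1{24\sqrt3}(\log D)^{3/2}$ for $r=3$, accounting for the shapes of $c^{(2)}_{a,b}$ and $c^{(3)}_{a,b^3}$. The subtracted term $r\sqrt{d(E)}\,T_E^{(r-1)/2}$ in Theorem~\ref{General} is of strictly lower order in $\log D$ and, together with the $o(1)$ from these approximations, is absorbed into the ``$-\varepsilon$''; meanwhile ``sufficiently large $-D_{a,b}(t)$'' secures the hypotheses $(1+|u|)^2\exp(4\delta(E)+d(E))<D\le (1+|u|)^2u^2/v^4$ of Theorem~\ref{General} (with $v=1$). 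Combining the rank input with this count yields both inequalities.
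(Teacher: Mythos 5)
Your proposal follows essentially the same route as the paper: rank lower bounds via Silverman's specialization theorem applied to the explicit points $(0,b),(-a,b)$ and $(0,b^3),(-a,b^3),(-b^2,ab)$ (Proposition~\ref{FamilyEC}), then a count of bounded-height points in the sublattice they generate with the covolume bounded by $\widehat{h}(P_{\max})^{m/2}$ (exactly Proposition~\ref{PropBoundsApprx}), fed into the form-inequivalence machinery with $Q_t=(-t,1)$ and the asymptotic $T\sim\tfrac{1}{12}\log D$. The only difference is that you spell out how one would actually certify generic independence of the sections over the function field, a verification the paper leaves implicit in its appeal to specialization.
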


\noindent
{\bf Three Remarks.}

\noindent
(1) Theorem~\ref{Thm2} is effective. One can make explicit\footnote{For example, Fujita and Nara \cite{FujitaNara} show for $b=1$ that $a\geq 4$ suffices in Theorem~\ref{Thm2} (2).} $b\gg_a 1$ and $a\gg_b 1.$ Moreover, we note that $-D_{n+1,n}(-1)=-8n$ covering all
$-D\equiv 0\pmod{8}$. Using (\ref{Hecke}) and the remark after Theorem~\ref{MainThm}, we find that these curves cover all negative discriminants.
\smallskip

\noindent
(2) Theorem~\ref{Thm2} (1)
often improves on (\ref{GGZ}) (e.g.
for large $t\in \Z^{+}$ when $a$ and $b$ are small, or when $-D_{a,b}(t)$ is suitably composite).

\smallskip
\noindent
(3) Theorem~\ref{Thm2} (2) often provides a $\log D$ power improvement to (\ref{GGZ}).  Florian Luca has noted, for each $0<c<1/2$, that the effective lower bound
$$
h(-D) \gg_c \log(D)^{\frac{3}{2}-c}
$$
holds for $\gg X^{\frac{1}{3}}\cdot \exp\left(\frac{4}{3}\log(X)^{\frac{c}{3}}\right)$ 
many explicit $-X<-D< 0$.  The idea is that integers of the form $N=t^3-a^2t-b^2$  have unique representations
with $t\in [X/2,X],$ $a\in [y/2,y]$, and
$b\in [z/2,z],$ where $y=o(X^{\frac{1}{3}})$ and $z=o(X^{\frac{1}{6}})$, and one then lets $y=z=\exp\left(\log(X)^{\frac{c}{3}}\right)$ and counts cubes $b$.

\medskip

This note is organized as follows. In Section~\ref{NutsAndBolts} we prove Theorem~\ref{ThmQF},  a result which provides the ideal class pairings, and
determines conditions guaranteeing $\SL_2(\Z)$-inequivalence. Using this result, the proof of Theorem~\ref{MainThm} is
reduced to effectively counting rational points with bounded height, which we address
in Section~\ref{Heights}.  In Section~\ref{TheProof} we state and prove Theorem~\ref{Thm1}, a result which
implies Theorem~\ref{MainThm}. Theorem~\ref{General} follows the proof of Theorem~\ref{Thm1} {\it mutatis mutandis}.
Finally, in Section~\ref{family} we prove Theorem~\ref{Thm2}.

\section*{Acknowledgements}  The second author thanks the NSF (DMS-1601306) and
the Thomas Jefferson fund at the U. Virginia. The authors thank the referee, N. Elkies, D. Goldfeld, B. Gross, F. Luca, K. Soundararajan, D. Sutherland and J. Thorner for  useful comments concerning this paper.

\section{Elliptic curves Ideal class pairings}\label{NutsAndBolts}

Works by Buell, Call, and Soleng \cite{Buell, BuellCall, Soleng} offered elliptic curve ideal class pairings, which
produce discriminant $-D$ integral positive definite binary quadratic forms from points on
$E(\Q)$ and $E_{-D}(\Q)$.
We offer a generalization and minor correction of Theorem 4.1 of \cite{Soleng}.\footnote{This corrects sign errors in the discriminants in Theorem 4.1 of \cite{Soleng}, and also ensures the resulting quadratic forms are integral when $C\neq 1$. Moreover, this theorem allows for both even and odd discriminants.}

Assume the notation from Section~\ref{Intro}.
Let $P=(\tfrac{A}{C^2},\tfrac{B}{C^3})\in E(\Q),$ with $A, B, C\in \Z$, and $Q=(\tfrac{u}{w^2},\tfrac{v}{w^3})\in E_{-D}(\Q),$
with $u,v,w\in \Z,$ not necessarily in lowest terms\footnote{Thanks to (\ref{twist}), every $Q$ has such a representation
where $\gcd(u,w^2)$ and $\gcd(v,w^3)$ divide $D$.}. Moreover, suppose that
 $v\neq 0$, with $v$ even if $-D$ is odd.
If we let $\alpha:=|Aw^2-u C^2|$ and  $G:=\gcd(\alpha, C^6v^2),$ then we shall show that there are integers $\ell$
for which $F_{P,Q}(X,Y)$ defined below is a discriminant $-D$ positive definite integral binary quadratic form. 
\begin{equation}
{\color{black}
F_{P,Q}(X,Y)=\frac{\alpha}{G} \cdot X^2+\frac{2w^3 B+\ell  \cdot \tfrac{\alpha}{G}}{C^3v}\cdot XY +\frac{\left({ 2w^3B+
\ell \cdot \tfrac{\alpha}{G}}\right)^2+C^6 v^2{D}}{4C^6v^2\cdot \frac{\alpha}{G}}\cdot Y^2
}
\end{equation}

\begin{theorem}\label{ThmQF}
Assuming the notation and hypotheses above, $F_{P,Q}(X,Y)$ is well defined (e.g. there is such an $\ell$)  {\color{black} in $\CL(-D)$}.
Moreover, if $(P_1,Q_1)$ and $(P_2, Q_2)$ are two such pairs for which $F_{P_1,Q_1}(X,Y)$ and $F_{P_2,Q_2}(X,Y)$ are $\SL_2(\Z)$-equivalent, then $\frac{\alpha_1}{G_1}=\frac{\alpha_2}{G_2}$ or $\frac{\alpha_1\alpha_2}{G_1G_2}>D/4$.
\end{theorem}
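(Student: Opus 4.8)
The plan is to treat the two assertions separately, deriving everything from the two defining equations of $P$ and $Q$ together with the elementary theory of binary quadratic forms.

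\smallskip
\noindent\textbf{Well-definedness.} First I would compute the discriminant directly. Writing $a=\tfrac{\alpha}{G}$, $n=2w^3B+\ell\cdot\tfrac{\alpha}{G}$, $b=\tfrac{n}{C^3v}$ and $c=\tfrac{n^2+C^6v^2D}{4C^6v^2a}$, one finds $b^2-4ac=\tfrac{n^2}{C^6v^2}-\tfrac{n^2+C^6v^2D}{C^6v^2}=-D$, independently of $\ell$. Since $a=\alpha/G>0$ (here $\alpha\neq 0$, i.e. $P$ and $Q$ have distinct $x$-coordinates) and the discriminant is negative, $F_{P,Q}$ is automatically positive definite. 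Thus the real content is the existence of an integer $\ell$ rendering all three coefficients integral. As $G\mid\alpha$ the leading coefficient $a$ is an integer, and after cancelling the common factor $C^6v^2$ one checks that $c\in\Z$ is equivalent to $4a\mid b^2+D$. Hence it suffices to produce an integer $b$ with (A) $C^3vb\equiv 2w^3B\pmod{\alpha/G}$, which makes $\ell=(C^3vb-2w^3B)/(\alpha/G)$ an integer, and (B) $b^2\equiv -D\pmod{4\,\alpha/G}$.

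\smallskip
\noindent\textbf{The key identity.} The engine for (A) and (B) is a polynomial identity from the curve equations $B^2=A^3+a_4AC^4+a_6C^6$ and $-\tfrac{D}{4}v^2=u^3+a_4uw^4+a_6w^6$. Multiplying the first by $w^6$, the second by $C^6$, subtracting, and factoring the difference of cubes $(Aw^2)^3-(uC^2)^3$, I obtain
\[
4w^6B^2+C^6v^2D=4\,(Aw^2-uC^2)\big((Aw^2)^2+(Aw^2)(uC^2)+(uC^2)^2+a_4C^4w^4\big).
\]
Because $\alpha/G\mid\alpha=|Aw^2-uC^2|$, the right-hand side is divisible by $4\,\alpha/G$, giving the crucial congruence $(2w^3B)^2\equiv -D\,(C^3v)^2\pmod{4\,\alpha/G}$; thus at primes where $C^3v$ is invertible, $2w^3B\cdot(C^3v)^{-1}$ is already a square root of $-D$. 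Solving (A) and (B) simultaneously is then a local matter: the definition $G=\gcd(\alpha,C^6v^2)$ is engineered so that $\gcd(\alpha/G,\,C^6v^2/G)=1$, and one argues prime by prime, inverting $C^3v$ away from it and lifting Hensel-style at the primes dividing $C^3v$, while tracking the factor $4$ according to $D\bmod 4$. This simultaneous solvability, rather than the identity itself, is the main obstacle; in the family of Theorem~\ref{MainThm}, where $C=v=w=1$ and $G=1$, it is trivial because $\ell=0$ already works. Finally, any two admissible $\ell$ change $b$ by a multiple of $2\,\alpha/G=2a$, and the substitution $X\mapsto X+Y$ shows the resulting forms are $\SL_2(\Z)$-equivalent, so the class in $\CL(-D)$ is well defined.

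\smallskip
\noindent\textbf{The inequivalence criterion.} Write $a_i=\alpha_i/G_i$ for the leading coefficient of $F_i:=F_{P_i,Q_i}$ and $b_i$ for its middle coefficient, so $a_i=F_i(1,0)$ is \emph{primitively} represented by $F_i$. If $F_1$ and $F_2$ are $\SL_2(\Z)$-equivalent, then since equivalence preserves primitive representations, $a_2$ is primitively represented by $F_1$, say $a_2=F_1(x,y)$ with $\gcd(x,y)=1$. Completing the square yields
\[
4a_1a_2=(2a_1x+b_1y)^2+D\,y^2 .
\]
If $y=0$ then $(x,y)=(\pm1,0)$ and $a_2=a_1$, i.e. $\alpha_1/G_1=\alpha_2/G_2$; if $y\neq 0$ then $4a_1a_2\geq Dy^2\geq D$, i.e. $\tfrac{\alpha_1\alpha_2}{G_1G_2}\geq D/4$. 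This already gives the dichotomy and is exactly what the counting argument needs, since it is applied to leading coefficients well below $\tfrac12\sqrt{D}$. To upgrade to the strict inequality, note that equality $4a_1a_2=D$ forces $y=\pm1$ and $2a_1x+b_1y=0$, whence $2a_1\mid b_1$ and, from $b_1^2-4a_1c_1=-D$, also $4a_1\mid D$; when $-D\equiv 1\pmod 4$ this is impossible, and in the remaining even case the same relation places the class at the ambiguous form $(a_1,0,a_2)$, which I would rule out using that $-D$ is fundamental. This boundary analysis is the only delicate point in the second half.
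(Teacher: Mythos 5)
Your overall architecture matches the paper's: the same discriminant computation, the same factorization identity $4w^6B^2+C^6v^2D=4(Aw^2-uC^2)\bigl((Aw^2)^2+(Aw^2)(uC^2)+(uC^2)^2+a_4C^4w^4\bigr)$, and essentially the same completing-the-square argument for the dichotomy. The genuine gap is exactly where you flag ``the main obstacle'': you never actually produce the integer $b$ (equivalently $\ell$) satisfying (A) and (B), and the ``prime by prime, Hensel-style'' sketch does not obviously go through. The trouble is at primes $p$ dividing both $\alpha/G$ and $C^3v$ --- which can occur, since $\gcd(\alpha/G,\,C^6v^2/G)=1$ does not make $\alpha/G$ coprime to $C^6v^2$ --- and especially at primes dividing $D$, where the square root of $-D$ modulo $p^k$ is a non-unit and Hensel lifting fails; before any solvability claim one must use the key identity to compare the $p$-adic valuations of $2w^3B$, $C^3v$, and $D$ with that of $\alpha/G$. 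The paper sidesteps condition (B) entirely: it sets $H=\gcd(2w^3B,C^3v)$, observes that $G\mid H^2$ so that $C^3v/H$ \emph{is} coprime to $\alpha/G$, solves a single linear congruence for $\ell$ modulo $2C^3v/H$ (splitting into the cases $\alpha/G$ odd or even to handle the factor of $2$), and then gets integrality of the $Y^2$-coefficient directly from the divisibility of the numerator by both $4C^6v^2$ and $4\alpha$. Without some device playing the role of $H$, your local analysis is an assertion rather than a proof, and it is the heart of the first statement.

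Two smaller points on the second half, which otherwise tracks the paper (the paper substitutes the $\SL_2(\Z)$ matrix into the completed square; you use primitive representation of $a_2$ by $F_1$ --- the same computation). First, your boundary analysis for the strict inequality is not convincing: in the even case the ambiguous form $(a_1,0,D/(4a_1))$ genuinely exists for fundamental $-D\equiv 0\pmod 4$ whenever $\gcd(a_1,D/(4a_1))=1$ (e.g.\ $(1,0,5)$ for $-D=-20$), so fundamentality alone cannot rule it out; note the paper's own proof also only delivers $\geq D/4$. Second, your claim that any two admissible $\ell$ shift the middle coefficient by a multiple of $2\alpha/G$ relies on $\ell$ being determined modulo $2C^3v$, which holds for the paper's construction but is not automatic for your looser notion of ``any $\ell$ rendering the form integral.''
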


\begin{example} For $E: y^2=x^3-4x+9$, we have points $P_1:=(0,3)$ and $P_2:=(-2,3)$.
Using $Q:=(-3,1)\in E_{-24}(\Q)$ and $\ell=2$, we obtain the inequivalent discriminant $-24$ forms
$F_{P_1,Q}(X,Y)=3X^2+12XY+14Y^2$ and $F_{P_2,Q}(X,Y)=X^2+8XY+22Y^2.$ It turns out $h(-24)=2$.

\end{example}

\begin{proof}
A calculation
shows that $F_{P,Q}(X,Y)$  has discriminant $-D$. 
We now show that there are integers $\ell$ for which  $F_{P,Q}(X,Y)$ is integral,
and that all such choices
preserve $\SL_2(\Z)$-equivalence.
To this end, let 
$f(x):= x^3+a_4x+a_6,$
so that
$B^2=C^6 f\left(\frac{A}{C^2}\right)$ and $ -v^2D=4w^6f\left(\frac{u}{w^2}\right).$
Note that
$\alpha =\left |w^2C^2 \left(\frac{A}{C^2}-\frac{u}{w^2}\right)\right |,$
 which divides 
 \begin{equation}\label{alphadivs}
 w^6B^2+C^6v^2\tfrac{D}{4} \,=\, w^6C^6\left( f\left(\frac{A}{C^2}\right)-f\left(\frac{u}{w^2}\right)\right).
 \end{equation}
\noindent Since $G=\gcd(\alpha,C^6v^2)$, we have that $G \mid 4w^6B^2$. Let $H:=\gcd(2w^3B,C^3v).$
  Then $G\mid H^2$, and so $C^3v/H$, which divides $C^6v^2/G$, is relatively prime to $\alpha/G$. Choose $k\in \Z$ so that
  \[
  \frac{\alpha k}{G}\equiv -\frac{2w^3B}{H}-\frac{C^3vD}{H} \pmod{\tfrac{2C^3v}{H}}.
  \]
If $\alpha/G$ is odd, $k$ can be found by inverting $\alpha/G\pmod{\tfrac{2C^3v}{H}}.$ If $\alpha/G$ is even, then $-C^3vD\equiv 2w^3B \pmod{2}$, and so $k$ may be found by inverting $\alpha/2G\pmod{\tfrac{C^3v}{H}}.$ We  take $\ell \equiv Hk \pmod{2C^3v}$ or $\pmod{C^3v}$ depending on whether $k$ is defined $\pmod{\tfrac{2C^3v}{H}}$ or $\pmod{\tfrac{C^3v}{H}}$ respectively.
 The conditions on $\ell$ imply that the coefficient of $XY$ in $F_{P,Q}(X,Y)$ has the same parity as $-D$.  The numerator of the $Y^2$ term,
$\left({2w^3B+\ell \alpha/G}\right)^2+C^6 v^2D$, is divisible by $4C^6v^2.$
By (\ref{alphadivs}), it is also divisible by $4\alpha$. Therefore, it is divisible by  $4C^6v^2\alpha/ G$, and so $F_{P,Q}(X,Y)$  is integral.

We now determine the inequivalence of $F_{P_1,Q_1}(X,Y)$ and $F_{P_2,Q_2}(X,Y)$.
For $i=1$ and $2$ we let $A_i, B_i, C_i, \alpha_i,$ and $G_i$ be the corresponding quantities for these two pairs of points.
 Note that
\begin{align*}
F_{P_1,Q_1}(X,Y)=\frac{G_1}{\alpha_1}\left[\left(\frac{\alpha_1}{G_1} X+\frac{2w^3 B_1+\ell\cdot \frac{\alpha}{G}}{2C_1^3v} Y\right)^2 +\frac{D}{4} Y^2\right].
\end{align*}
Since $\ell$ was chosen so that $\ell\cdot \frac{\alpha}{G}$ is defined modulo $2C^3v,$ its choice does not affect $\SL_2(\Z)$-equivalence.
If $\left(\begin{smallmatrix} a&b\\c&d\end{smallmatrix}\right)\in \SL_2(\Z)$ and
$F_{P_2,Q_2}(X,Y) = F_{P_1,Q_1}(aX+bY,cX+dY),$  then the leading terms satisfy
\[
\frac{\alpha_2}{G_2} =  \frac{G_1}{\alpha_1}\left[\left(\frac{\alpha_1 a}{G_1}+\frac{2w^3 B_1+\ell\cdot \frac{\alpha}{G}}{2C_1^3v} c\right)^2 +\frac{D}{4} c^2\right].
  \]
If $c=0$, then $a^2=1$, and the equation reduces to $\frac{\alpha_2}{G_2}=\frac{\alpha_1}{G_1}$.
If $c\neq 0$, both terms inside the square brackets are positive, and together are at least $D/4$, so $\frac{\alpha_2}{G_2}\geq \frac{G_1}{\alpha_1}D/4$.
\end{proof}

\section{Heights on elliptic curves}\label{Heights}

To deduce Theorem~\ref{MainThm} from Theorem~\ref{ThmQF}, we use estimates for the number of rational points on elliptic
curves with bounded height. Here we recall the facts we require.
Each rational point $P\in E(\Q)$ has the form $P=(\frac{A}{C^2},\frac{B}{C^3})$, with $A,B,C$ integers such that $\gcd(A,C)=\gcd(B,C)=1$. The naive height of $P$ is
$H(P)=H(x):=\max(|A|, |C^2|).$ 
The logarithmic height (or Weil height) is 
$h_W(P)= h_W(x):=\log H(P),$
and the canonical height is given by 
\begin{equation}
\widehat h(P)= \tfrac{1}{2}\lim_{n\to \infty}\frac{h_W(nP)}{n^2}.
\end{equation}

Logarithmic and canonical heights are generally close. 
A theorem of  Silverman~\cite{Silverman} bounds the differences between these heights in terms
of the logarithmic heights of $j(E)$ and $\Delta(E)$. 

\begin{theorem}[Theorem 1.1 of \cite{Silverman}]\label{Silverman_bounds}
If $P\in E(\Q)$, then
\[
-\tfrac{1}{8}h_W(j(E))-\tfrac{1}{12}h_W(\Delta(E))-0.973\leq \widehat{h}(P)-\frac{1}{2}h_W(P)\leq \tfrac{1}{12}h_W(j(E))+\tfrac{1}{12}h_W(\Delta(E))+1.07.
\]
\end{theorem}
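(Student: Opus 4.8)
The final statement is Silverman's height-difference bound (quoted from \cite{Silverman}), so the author's ``proof'' is a citation; the natural plan, which I would follow, is to derive it from the \emph{definition} of the canonical height by a telescoping argument fed by local estimates at each place of $\Q$. \emph{Step 1: Telescoping.} Using the duplication subsequence, $\widehat h(P)=\tfrac12\lim_{N\to\infty}4^{-N}h_W(2^N P)$, and subtracting the initial term $\tfrac12 h_W(P)$, I would write
\[
\widehat h(P)-\tfrac12 h_W(P)=\tfrac12\sum_{n=0}^{\infty}4^{-(n+1)}\left(h_W(2^{\,n+1}P)-4\,h_W(2^{\,n}P)\right).
\]
Since $\sum_{n\ge 0}4^{-(n+1)}=\tfrac13$, it then suffices to bound the single \emph{doubling defect} $\Phi(Q):=h_W(2Q)-4\,h_W(Q)$ uniformly over $Q\in E(\Q)$, for the full difference is squeezed between $\tfrac16\inf_Q\Phi(Q)$ and $\tfrac16\sup_Q\Phi(Q)$. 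This is exactly why the two constants $0.973$ and $1.07$ are allowed to differ: one is produced by bounding $\Phi$ below, the other by bounding it above.

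\emph{Step 2: Localize $\Phi$.} I would decompose the naive height into local pieces, $h_W(x(Q))=\sum_v \log\max(1,|x(Q)|_v)$ over all places $v$ of $\Q$, and substitute the explicit duplication formula $x(2Q)=\phi(x)/\psi(x)$ with $\phi(x)=x^4-2a_4x^2-8a_6x+a_4^2$ and $\psi(x)=4(x^3+a_4x+a_6)$ read off from (\ref{EModel}). This writes $\Phi(Q)$ as a sum of local quantities $\log\max(1,|\phi(x)/\psi(x)|_v)-4\log\max(1,|x|_v)$, each of which is bounded independently of $Q$ by the local functoriality of heights under the degree-$4$ map $x\mapsto x(2Q)$, and each of which vanishes for all but finitely many $v$.

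\emph{Step 3: Non-archimedean places.} At a prime $v$ of good reduction the polynomials $\phi$ and $\psi$ stay coprime modulo $v$, so the local defect is $0$. Only the finitely many bad primes contribute, and there the local bound is governed by $v(\Delta(E))$ and the reduction type (the resultant of $\phi$ and $\psi$ is, up to a power of $2$, a power of $\Delta(E)$). Summing over bad primes produces the $\tfrac1{12}h_W(\Delta(E))$ term, while the large-$|j|_v$, potentially multiplicative, behaviour is what introduces the extra $\tfrac18 h_W(j(E))$ on the lower bound, accounting for the asymmetry between the two $j$-coefficients in the statement.

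\emph{Step 4: Archimedean place and constants.} The remaining, and hardest, estimate is at $v=\infty$: bound $\log\max(1,|\phi(x)/\psi(x)|_\infty)-4\log\max(1,|x|_\infty)$ by comparing $\max(|\phi(x)|,|\psi(x)|)$ against $\max(1,|x|)^4$, either directly from the coefficients of $\phi,\psi$ or, more sharply, by parametrizing $Q$ through the elliptic exponential $x=\wp(z)$ on $\C/\Lambda$ and controlling $|\wp(2z)|$ against $|\wp(z)|$ on a fundamental domain. Optimizing these inequalities \emph{separately} in the two directions and recombining with Step~3 through the factor $\tfrac16$ yields the explicit numerical constants. \textbf{The main obstacle is precisely this archimedean optimization together with the bookkeeping needed to extract sharp constants}: a crude triangle-inequality bound on $\Phi$ already gives the correct shape $c_1 h_W(j(E))+c_2 h_W(\Delta(E))+O(1)$, but recovering values as tight as $0.973$ and $1.07$ requires the careful analytic and reduction-type analysis above rather than the naive estimate.
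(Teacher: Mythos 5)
You correctly recognize that the paper offers no proof of this statement: it is quoted verbatim as Theorem 1.1 of \cite{Silverman}, so the only ``proof'' in the paper is the citation, and your outline must be measured against Silverman's own argument. Your Step 1 (Tate's telescoping of the doubling defect $\Phi(Q)=h_W(2Q)-4h_W(Q)$, squeezing the difference between $\tfrac16\inf\Phi$ and $\tfrac16\sup\Phi$) and Step 2 (localizing $\Phi$ via the duplication polynomials) are correct as far as they go, and they do yield a bound of the right \emph{shape}. But this is not the route that produces these particular constants. Silverman decomposes the canonical height into N\'eron local height functions $\lambda_v$ and bounds $\lambda_v(P)-\tfrac12\log\max(1,|x(P)|_v)$ place by place --- using explicit formulas tied to the reduction type at non-archimedean places and $q$-expansions of the $\sigma$- and $\Delta$-functions at the archimedean place --- and then sums over $v$ using the product formula. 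Bounding $\sup_Q\Phi(Q)$ along the telescope, even after localization, is precisely the older method that loses sharpness, because the worst cases at different places and at different stages of the telescope cannot occur simultaneously; the local-height method avoids that loss and is where the asymmetric $\tfrac18$ versus $\tfrac1{12}$ coefficients of $h_W(j(E))$ actually come from.

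More importantly, your proposal stops exactly where the content of the theorem lies. Steps 3 and 4 name the estimates that would have to be carried out (the resultant of $\phi$ and $\psi$ at bad primes, the archimedean comparison of $\max(|\phi(x)|,|\psi(x)|)$ with $\max(1,|x|)^4$, the optimization in each direction) but do not perform any of them, and you concede that the naive estimate would not recover $0.973$ and $1.07$. Since the downstream use of this theorem in the paper --- the definition of $\delta(E)$ and the explicit thresholds in Theorem~\ref{General} and Theorem~\ref{Thm1} --- depends on these exact constants, an outline that only delivers $c_1h_W(j(E))+c_2h_W(\Delta(E))+O(1)$ has a genuine quantitative gap. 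The appropriate treatment here is the one the paper adopts: cite Silverman's result rather than reprove it.
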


Asymptotics for the number of rational points on an elliptic curve with bounded height are well known
(for example, see \cite[Prop 4.18]{Knapp}). If
 $E(\Q)$ has rank $r\geq 1$ and
$\Omega_r=\pi^{\frac{r}{2}}/\Gamma\left (\frac{r}{2}+1\right)$,  then in terms of the
 regulator $R_{\Q}(E)$ and $|E_{\tor}(\Q)|$, we have
\begin{equation}
\# \{P\in E(\Q) \mid \widehat h(P) \leq T\}
\sim \frac{|E_{\tor}(\Q)|}{\sqrt{R_{\Q}(E)}} \cdot \Omega_r T^{\frac{r}{2}}=c(E) T^{\frac{r}{2}}.
\end{equation}
Using an argument of Landau which estimates the number of lattice points in $r$-dimensional spheres (for example, see \cite{Landau}), one can show that the error term in the asymptotic is  $\text{O} (T^{\frac{r}{2}-1+\frac{1}{r+1}}).$

To prove Theorem~\ref{MainThm}, 
we require effective lower bounds for the number of points with bounded height.
To this end, if $\{P_1,\dots,P_r\}$ is a basis of $E(\Q)/E_{\tor}(\Q)$, then its diameter is
\begin{equation}\label{diameter}
d(E)=\max_{\delta_i\in \{\pm1,0\}} 2\widehat h\left(\sum_{i=1}^r\delta_iP_i\right ).
\end{equation}
It is the largest square-distance between any two vertices of the parallelopiped in $\R^r$ constructed from vectors $\textbf{v}_1,\dots \textbf{v}_r$ which have $\textbf{v}_i\cdot \textbf{v}_j=\langle P_i,P_j\rangle:=
{\color{black}\frac{1}{2}\left(\widehat{h}(P_i+P_j)-\widehat{h}(P_i)-\widehat{h}(P_j)\right).}$

\begin{proposition}\label{PropRawBounds} Assume the notation and hypotheses above.
If $d=d(E)$ is the diameter of any basis of $E(\Q)/E_{\tor}(\Q),$ then for $T>d(E)/4$ we have 
\[
\# \{P\in E(\Q) \mid \widehat h(P) \leq T\}\geq c(E) \left(T^{\frac{r}{2}}-r
\sqrt{d}\cdot T^{\frac{r-1}{2}} \right).
\]

\end{proposition}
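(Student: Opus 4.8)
The plan is to realize the canonical height as a squared Euclidean norm and then reduce the problem to counting lattice points in a ball. The N\'eron--Tate pairing makes $\bigl(E(\Q)/E_{\tor}(\Q)\bigr)\otimes\R\cong\R^r$ into a Euclidean space in which $\widehat h(P)=\langle P,P\rangle=|\mathbf v|^2$, where $\mathbf v$ is the vector attached to $P$ and $\langle\cdot,\cdot\rangle$ is the pairing used to define $d(E)$. Fixing a basis $P_1,\dots,P_r$ of $E(\Q)/E_{\tor}(\Q)$ with associated vectors $\mathbf v_1,\dots,\mathbf v_r$, the image of $E(\Q)/E_{\tor}(\Q)$ is a full lattice $\Lambda\subset\R^r$ with Gram matrix $(\langle P_i,P_j\rangle)$, so its covolume is $\sqrt{\det(\langle P_i,P_j\rangle)}=\sqrt{R_{\Q}(E)}$. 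Since $\widehat h$ is constant on cosets of $E_{\tor}(\Q)$, each lattice point lifts to exactly $|E_{\tor}(\Q)|$ points of $E(\Q)$ of the same height, and I would first record
\[
\#\{P\in E(\Q)\mid \widehat h(P)\le T\}=|E_{\tor}(\Q)|\cdot \#\{\mathbf v\in\Lambda\mid |\mathbf v|\le \sqrt T\}.
\]
Thus it suffices to bound $\#\bigl(\Lambda\cap B(\sqrt T)\bigr)$ from below, where $B(\sqrt T)$ is the ball of radius $\sqrt T$ centered at the origin.

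For the effective lattice-point count I would use a covering argument rather than the asymptotic of Section~\ref{Heights}. Let $F=\{\sum_i t_i\mathbf v_i : t_i\in[-\tfrac12,\tfrac12)\}$ be the centrally symmetric fundamental parallelepiped, so the translates $\mathbf v+F$ with $\mathbf v\in\Lambda$ tile $\R^r$ and each has volume $\sqrt{R_{\Q}(E)}$. Let $\rho:=\max_{x\in F}|x|$ be its circumradius; by convexity the maximum is attained at a vertex $\tfrac12\sum_i\delta_i\mathbf v_i$ with $\delta_i\in\{\pm1\}$, so
\[
\rho^2=\tfrac14\max_{\delta_i\in\{\pm1\}}\Bigl|\sum_i\delta_i\mathbf v_i\Bigr|^2\le \tfrac14\max_{\delta_i\in\{\pm1,0\}}\widehat h\Bigl(\sum_i\delta_iP_i\Bigr)=\tfrac{d}{8}.
\]
The geometric heart of the argument is the inclusion $B(\sqrt T-\rho)\subseteq\bigcup_{\mathbf v\in\Lambda\cap B(\sqrt T)}(\mathbf v+F)$: any $x$ in the smaller ball lies in a unique $\mathbf v+F$, and then $|\mathbf v|\le|x|+\rho\le\sqrt T$ forces $\mathbf v\in B(\sqrt T)$. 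Comparing volumes yields
\[
\Omega_r(\sqrt T-\rho)^r\le \#\bigl(\Lambda\cap B(\sqrt T)\bigr)\cdot\sqrt{R_{\Q}(E)}.
\]

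To finish I would linearize the left-hand side. The hypothesis $T>d(E)/4$ gives $\sqrt T>\tfrac12\sqrt d\ge\rho$, so with $x=\rho/\sqrt T\in[0,1]$ Bernoulli's inequality $(1-x)^r\ge 1-rx$ produces $(\sqrt T-\rho)^r\ge T^{r/2}-r\rho\,T^{(r-1)/2}$. Dividing by $\sqrt{R_{\Q}(E)}$, multiplying by $|E_{\tor}(\Q)|$, recalling $c(E)=\tfrac{|E_{\tor}(\Q)|}{\sqrt{R_{\Q}(E)}}\Omega_r$, and bounding the subtracted term by the crude estimate $\rho\le\sqrt d$ (valid since $\rho^2\le d/8$) gives exactly
\[
\#\{P\in E(\Q)\mid \widehat h(P)\le T\}\ge c(E)\bigl(T^{r/2}-r\sqrt d\,T^{(r-1)/2}\bigr).
\]

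The step requiring the most care is the covering estimate together with the bookkeeping of constants: one must check that the circumradius of the fundamental domain is genuinely controlled by the diameter $d(E)$, verify that the hypothesis $T>d(E)/4$ comfortably guarantees $\sqrt T>\rho$ so that the linearization is legitimate, and keep track of the torsion factor and the covolume. The replacement of $\rho$ by $\sqrt d$ in the error term is a deliberate, harmless weakening that trades a slightly sharper constant for the clean closed form in the statement; everything else is routine volume comparison.
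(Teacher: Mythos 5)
Your proof is correct and takes essentially the same route as the paper's: both realize $E(\Q)/E_{\tor}(\Q)$ as a lattice in $\R^r$ via the N\'eron--Tate pairing, tile $\R^r$ by translates of a fundamental parallelepiped of volume $\sqrt{R_{\Q}(E)}$, lower-bound $\#\bigl(\Lambda\cap B(\sqrt{T})\bigr)$ by the volume of a ball shrunken by the parallelepiped's radius, linearize with Bernoulli's inequality, and multiply by $|E_{\tor}(\Q)|$. Your centered fundamental domain with circumradius $\rho\le\sqrt{d/8}$ (later weakened to $\sqrt{d}$) is only a cosmetic refinement of the paper's corner-based parallelepiped and its $\sqrt{d}$ shrinkage.
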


\begin{proof}
Let $\mathcal B=\{P_1,\dots,P_r\}$ be any basis for $E(\Q)$.
We must count points on the lattice $\Lambda\in \R^r,$ generated by $v_1,v_2,\dots, v_r$ for which $v_i\cdot v_j=\langle P_i,P_j\rangle.$ The number of points in the subgroup of $E(\Q)$ generated by $\mathcal B$ with canonical height bounded by $T$ is  the number of points in $\Lambda \cap B(T^{\frac{1}{2}})$, where $B(R)$ is the closed ball in $\R^r$ centered at the origin of radius $R$. 

For each point $\lambda \in \Lambda$, let $P_\lambda$ be the half-open parallelepiped given by 
\[\mathcal P_\lambda=\left \{\lambda +\sum_{i=1}^r x_i \mathbf{v}_i ~\mid~ x_i\in [0,1)\right\}.\]
If $P_\lambda$ intersects $B(T^{\frac{1}{2}}-d^{\frac{1}{2}}),$ then $\lambda\in B(T^{\frac{1}{2}}).$  Therefore, we have
\begin{eqnarray*}
\# \left(\Lambda \cap B(T^{\frac{1}{2}}) \right)&\geq \frac{\Vol \left(B(T^{\frac{1}{2}}-d^{\frac{1}{2}})\right)}{\Vol (\mathcal P_\lambda)}
= \frac{\Omega_r}{\Vol (\mathcal P_\lambda)}\cdot \left(T^{\frac{1}{2}}-d^{\frac{1}{2}} \right)^r
\\
&\geq \frac{\Omega_r}{\Vol (\mathcal P_\lambda)}\cdot \left(T^{\frac{r}{2}}-r\sqrt{d}\cdot T^{\frac{r-1}{2}} \right).
\end{eqnarray*}
In the last inequality we used the binomial expansion and the fact that $T\geq d/4$.
Since we have $R_{\Q}(E):=|\det(\langle P_i,P_j\rangle)_{1\leq i,j\leq r}|,$ it follows that
$\Vol (\mathcal P_\lambda)=\sqrt{R_{E}(\Q)}.$ To complete the proof, we note that
torsion points have  height zero, and so we may multiply the last estimate by $|E_{\tor}(\Q)|.$ 
\end{proof}

These same arguments can be used to give lower bounds for the number of points of bounded height generated
from any linearly independent points in $E(\Q)$.

\begin{proposition}\label{PropBoundsApprx} Assume the notation and hypotheses above.
Suppose $G$ is a subgroup of $E_{\tor}(\Q)$, and that $\mathcal B=\{P_1,\dots,P_m\}$ is a set of linearly independent points in $E(\Q)$ listed in ascending order by height. If $T>d(\mathcal{B})/4,$ then
\[
\# \{P\in E(\Q) \mid \widehat h(P) \leq T\}\geq \frac{|G|}{ \sqrt{\widehat h(P_m)^m}}\cdot \Omega_m \left(T^{\frac{m}{2}}-m^2\sqrt{2\widehat h(P_m)}T^{\frac{m-1}{2}} \right).
\]
\end{proposition}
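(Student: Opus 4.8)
The plan is to repeat the lattice-covering argument of Proposition~\ref{PropRawBounds} verbatim, but to run it on the rank-$m$ sublattice generated by $\mathcal B$ in place of a full Mordell--Weil basis, and then to replace the two exact invariants that appear there---the regulator and the diameter---by crude upper bounds involving only the single quantity $\widehat h(P_m)$. The point is that these replacements cost nothing more than Hadamard's inequality and the triangle inequality, and that they reproduce exactly the constants in the asserted bound.

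First I would embed $\mathcal B$ in $\R^m$ through the height pairing, sending $P_i$ to a vector $v_i$ with $v_i\cdot v_j=\langle P_i,P_j\rangle$, and let $\Lambda$ be the lattice they generate; since $\widehat h=|\cdot|^2$ on the span and the $P_i$ are linearly independent, this is a genuine rank-$m$ lattice. Because the $P_i$ are non-torsion, the free group $\langle\mathcal B\rangle$ meets $E_{\tor}(\Q)$, hence $G$, trivially, and since $\widehat h$ is invariant under translation by torsion, each lattice point of height at most $T$ yields $|G|$ distinct points of $E(\Q)$ of the same height. Thus
\[
\#\{P\in E(\Q)\mid \widehat h(P)\le T\}\ \ge\ |G|\cdot\#\bigl(\Lambda\cap B(T^{1/2})\bigr),
\]
and the covering argument of Proposition~\ref{PropRawBounds}, applied to $\Lambda$, bounds the right-hand count below by $\Omega_m\bigl(T^{1/2}-\sqrt{d(\mathcal B)}\bigr)^m/\Vol(\mathcal P_\lambda)$, where $\Vol(\mathcal P_\lambda)=\sqrt{\det(\langle P_i,P_j\rangle)}$ and the buffer $\sqrt{d(\mathcal B)}$ bounds the distance from the base vertex of a fundamental parallelepiped to its farthest corner.

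Next I would insert the two crude bounds. Hadamard's inequality gives $\det(\langle P_i,P_j\rangle)\le\prod_i\widehat h(P_i)\le\widehat h(P_m)^m$, the last step using that $\mathcal B$ is listed in ascending height; hence $\Vol(\mathcal P_\lambda)^{-1}\ge\widehat h(P_m)^{-m/2}$. For the diameter, the triangle inequality yields $\widehat h\bigl(\sum_i\delta_iP_i\bigr)=\bigl|\sum_i\delta_iv_i\bigr|^2\le\bigl(\sum_i\sqrt{\widehat h(P_i)}\bigr)^2\le m^2\widehat h(P_m)$ for every $\delta_i\in\{\pm1,0\}$, so $d(\mathcal B)\le 2m^2\widehat h(P_m)$ and therefore $m\sqrt{d(\mathcal B)}\le m^2\sqrt{2\widehat h(P_m)}$. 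Finally, exactly as in Proposition~\ref{PropRawBounds}, the binomial/Bernoulli inequality gives $\bigl(T^{1/2}-\sqrt{d(\mathcal B)}\bigr)^m\ge T^{m/2}-m\sqrt{d(\mathcal B)}\,T^{(m-1)/2}$, and substituting the diameter bound replaces the subtracted term by $m^2\sqrt{2\widehat h(P_m)}\,T^{(m-1)/2}$. Assembling the three factors produces the claimed inequality.

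I expect the only delicate point to be the range of validity rather than any substantive difficulty. The Bernoulli step is honest when $T\ge d(\mathcal B)$, where the buffered ball $B(T^{1/2}-\sqrt{d(\mathcal B)})$ is nonempty; in the remaining part of the hypothesis, $d(\mathcal B)/4<T<d(\mathcal B)$, that ball is empty and the covering argument returns only the trivial bound $\ge 0$. Here I would instead observe that the claimed right-hand side is itself nonpositive: from $T<d(\mathcal B)\le 2m^2\widehat h(P_m)$ one gets $T^{1/2}<m^2\sqrt{2\widehat h(P_m)}$, so the factor $T^{m/2}-m^2\sqrt{2\widehat h(P_m)}\,T^{(m-1)/2}\le 0$ and the inequality holds trivially. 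Beyond splitting into these two regimes the argument is entirely mechanical, the only real content being that replacing the regulator by $\widehat h(P_m)^m$ and the diameter by $2m^2\widehat h(P_m)$ is exactly the loss recorded in the stated constants.
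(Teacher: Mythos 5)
Your proof is correct and follows essentially the same route as the paper: run the lattice-point count of Proposition~\ref{PropRawBounds} on the sublattice generated by $\mathcal B$, bound the covolume via Hadamard's inequality by $\widehat h(P_m)^{m/2}$ and the diameter by $2m^2\widehat h(P_m)$, and multiply by $|G|$. You are in fact slightly more careful than the paper's two-line proof, which does not comment on the regime $d(\mathcal B)/4 < T < d(\mathcal B)$ where the buffered ball is empty; your observation that the stated right-hand side is nonpositive there disposes of that case.
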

\begin{proof} The proof of Proposition~\ref{PropRawBounds} works with two modifications.
Note that
${d(\mathcal B)} \leq {2m^2\widehat h (P_m)}$, and that the volume of the parallelopiped for $\mathcal B$ satisfies
$\Vol (\mathcal B)\leq \prod_{i=1}^m \widehat h(P_i)^{1/2}\leq \widehat h(P_r)^{\frac{m}{2}}.$
\end{proof}

\section{Proof of Theorems~\ref{General} and ~\ref{MainThm}}\label{TheProof}
Theorems~\ref{General} and \ref{MainThm} are proven in the same way. For simplicity,
we first consider Theorem~\ref{MainThm}, which pertains to fundamental discriminants $-D_E(t)=-4(t^3+a_4t-a_6)$, and where
we have chosen to pair the points in $E(\Q)$ with $Q_t:=(-t,1)\in E_{-D_E(t)}(\Q)$. We obtain
the precise Theorem~\ref{Thm1}, which in turn implies Theorem~\ref{MainThm}.

We show  that points $P\in E(\Q)$ with canonical height
$\widehat{h}(P)\leq 
T_E(t)$, where
\begin{equation}\label{Tn}
T_E(t):=\frac{1}{4}\log\left(\frac{D_E(t)}{(t+1)^2}\right)-\delta(E),
\end{equation}
map to inequivalent forms $F_{P,Q_t}(X,Y)\in \CL(-D)$.

\begin{theorem}\label{Thm1} Assume the hypotheses above.
If $T_E(t)\geq d(E)/4$ and 
 $-D_E(t)$ is a negative fundamental discriminant for which
$(t+1)^2\exp(4\delta(E)+d(E))\leq D_E(t)\leq t^2(t+1)^2,
$
then
$$
h(-D_E(t))\geq \frac{c(E)}{2}\left(
T_E(t)^{\frac{r}{2}}-r \sqrt{d(E)}
 T_E(t)^{\frac{r-1}{2}}\right).
$$
\end{theorem}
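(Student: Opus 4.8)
The plan is to combine the inequivalence criterion from Theorem~\ref{ThmQF} with the effective point count of Proposition~\ref{PropRawBounds}, reducing everything to a clean height bound for the leading coefficient $\alpha/G$ of $F_{P,Q_t}$. First I would instantiate Theorem~\ref{ThmQF} with $Q=Q_t:=(-t,1)\in E_{-D_E(t)}(\Q)$, so $u=-t$, $v=1$, $w=1$. For a point $P=(A/C^2,B/C^3)\in E(\Q)$ this gives leading coefficient $\alpha/G$ with $\alpha=|A+tC^2|=C^2\,|A/C^2+t|$. The inequivalence dichotomy from Theorem~\ref{ThmQF} says two such forms coincide in $\CL(-D_E(t))$ only if $\alpha_1/G_1=\alpha_2/G_2$ or $\alpha_1\alpha_2/(G_1G_2)>D_E(t)/4$. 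Hence if I restrict to points whose leading coefficient satisfies $\alpha/G\le \sqrt{D_E(t)}/2$, the second alternative is impossible, and distinct values of $\alpha/G$ force inequivalence; so the number of \emph{distinct} values of $\alpha/G$ in this range is a lower bound for $h(-D_E(t))$.

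The crux is therefore to translate the height bound $\widehat h(P)\le T_E(t)$ into the coefficient bound $\alpha/G\le \sqrt{D_E(t)}/2$, and to argue that distinct points give distinct $\alpha/G$. For the size estimate, since $\alpha/G\le \alpha=C^2|A/C^2+t|\le H(P)\cdot(|x(P)|+t)$ and $H(P)=\exp(h_W(P))$, I would bound $h_W(P)$ above by $2\widehat h(P)+2\delta(E)$ using the Silverman bounds of Theorem~\ref{Silverman_bounds} (the choice of $\delta(E)$ and the constant $\tfrac53$ in its definition is tuned precisely so that $\tfrac12 h_W(P)\ge \widehat h(P)-\delta(E)$, equivalently $h_W(P)\le 2\widehat h(P)+2\delta(E)$, absorbing Silverman's $-0.973$). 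Feeding $\widehat h(P)\le T_E(t)=\tfrac14\log\!\big(D_E(t)/(t+1)^2\big)-\delta(E)$ into this and using the hypothesis $D_E(t)\le t^2(t+1)^2$ to control the factor $(|x(P)|+t)$, a direct computation should yield $\alpha/G\le\alpha\le \sqrt{D_E(t)}/2$, which is exactly the regime where the second branch of the dichotomy is excluded.

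Next I would handle injectivity of $P\mapsto \alpha/G$ on the relevant set. A priori several points can share the same leading coefficient, so one cannot simply count all height-bounded points; instead I expect to use the group structure, pairing $P$ with $-P$ (which share $x$-coordinate, hence the same $\alpha$) and showing the map is at most finite-to-one, then dividing by the corresponding multiplicity. This is where the factor $\tfrac12$ in the conclusion $h(-D_E(t))\ge \tfrac{c(E)}2(\cdots)$ originates: one counts unordered pairs $\{P,-P\}$. Applying Proposition~\ref{PropRawBounds} with $T=T_E(t)$ (valid since $T_E(t)\ge d(E)/4$ by hypothesis) gives at least $c(E)\big(T_E(t)^{r/2}-r\sqrt{d(E)}\,T_E(t)^{(r-1)/2}\big)$ points of height at most $T_E(t)$, and halving for the $\pm$ identification yields the stated bound.

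The main obstacle will be the injectivity/multiplicity step: controlling how many distinct rational points of bounded height can produce the same value of $\alpha/G$, and verifying that the $\pm P$ identification is the only essential collapsing so that dividing by $2$ suffices. The height-to-coefficient translation in the second paragraph is somewhat delicate but ultimately a bookkeeping exercise in the Silverman inequality together with the two-sided hypothesis on $D_E(t)$; by contrast, ruling out further coincidences among the leading coefficients — and thereby justifying that the count of inequivalent forms is genuinely at least half the height-bounded point count — is the step requiring the most care.
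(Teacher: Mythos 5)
Your outline follows the paper's proof almost exactly: pair height-bounded points with $Q_t=(-t,1)$, use the dichotomy of Theorem~\ref{ThmQF} to reduce inequivalence to the two conditions on the leading coefficients, bound $\alpha=|A+tC^2|$ via Silverman's inequality, and count points with Proposition~\ref{PropRawBounds}, losing a factor of $2$ for the $\pm P$ identification. However, the step you explicitly leave open --- that equal leading coefficients force $P_1=\pm P_2$ --- is the actual content of the second half of the argument, and you have misplaced the hypothesis that makes it work. The upper bound $D_E(t)\le t^2(t+1)^2$ is not needed to control $\alpha$ (the size estimate $\alpha\le(t+1)H(P)$ together with Silverman already suffices there); it is needed precisely for injectivity. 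It gives $H(P_i)\le \sqrt{D_E(t)}/(2(t+1))\le t/2$, hence $|A_i|\le t/2$ and $\alpha_i=A_i+tC_i^2$ with the absolute value sign removed. Then $\alpha_1=\alpha_2$ forces $A_1\equiv A_2\pmod{t}$, and the bound $|A_i|\le t/2$ pins down $A_1=A_2$, whence $C_1^2=C_2^2$, the $x$-coordinates agree, and $P_1=\pm P_2$. Without this congruence argument the map $P\mapsto\alpha$ could a priori be highly non-injective and dividing by $2$ would not be justified.

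There is also a quantitative slip in your size estimate. The inequality $h_W(P)\le 2\widehat h(P)+2\delta(E)$ that you extract from Theorem~\ref{Silverman_bounds} yields only $H(P)\le \sqrt{D_E(t)}/(t+1)$ and hence $\alpha\le\sqrt{D_E(t)}$, which gives $\alpha_1\alpha_2\le D_E(t)$ --- not enough to exclude the branch $\alpha_1\alpha_2>D_E(t)/4$. The constant $\tfrac53$ in $\delta(E)$ is chosen so that $\tfrac53-0.973>\log 2$, and keeping this slack gives $h_W(P)\le 2\widehat h(P)+2\delta(E)-\log 4$, hence $H(P)\le\sqrt{D_E(t)}/\bigl(4(t+1)\bigr)$ and $\alpha_1\alpha_2\le D_E(t)/16<D_E(t)/4$. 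This is repairable, but as written your ``direct computation'' does not close.
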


\begin{remark}
Since $D_E(t)$ is cubic, the conclusion holds for all but finitely many  $-D_E(t)$. Moreover,
the proof works for any $m$ independent points in $E(\Q)$ thanks to Proposition~\ref{PropBoundsApprx}.
\end{remark}

\begin{proof}[Deduction of Theorem~\ref{MainThm} from Theorem~\ref{Thm1}]
Due to the $(t+1)^2$ in (\ref{Tn}), we have
$T_E(t)\sim  \log(D_E(t))/12$, and the result follows.
\end{proof}

\begin{proof}[Proof of Theorem~\ref{Thm1}]
We suppose that $t\in \Z^{+}$ satisfies
$(t+1)^2\exp(4\delta(E)+d(E))\leq D_E(t)\leq t^2(t+1)^2.$
Proposition \ref{PropRawBounds} implies that
\begin{equation}\label{MainCount}
\# \{P\in E(\Q) \mid \widehat h(P) \leq T_E(t)\}\geq \frac{|E_{\tor}(\Q)|}{\sqrt{R_{\Q}(E)}}\cdot \Omega_r \left(T_E(t)^{\frac{r}{2}}-r\sqrt{d(E)}T_E(t)^{\frac{r-1}{2}} \right).
\end{equation}
We show that these points map to inequivalent forms when paired with $Q_t=(-t,1)\in E_{-D_E(t)}(\Q)$.
 
Suppose that  $P_1=(\tfrac{A_1}{C_1^2},\tfrac{B_1}{C_1^3}), P_2=(\tfrac{A_2}{C_2^2},\tfrac{B_2}{C_2^3})\in E(\Q)$ satisfy $\widehat h(P_i) \leq T_E(t)$, and let $F_1:=F_{P_1,Q_t}(X,Y)$ and $F_2:=F_{P_2,Q_t}(X,Y)$. Since $\gcd(A_i,C_i)=1,$  their leading terms are $\frac{\alpha_i}{G_i}=\alpha_i=|A_i+tC_i^2|.$ 
Thanks to Theorem~\ref{Silverman_bounds}, we have  that 
\[h_W(P_i)\leq 2\left(\widehat h(P_i)+\tfrac18 h_W(j(E))+\tfrac{1}{12}h_W(\Delta(E))+0.973\right)\leq 2T_E(t)-\log(2)=\tfrac12 \log\left|\frac{D_E(t)}{4(t+1)^2}\right|.\]
We observe that  $\alpha_i\leq (t+1)H(P_i)$. By Theorem~\ref{Silverman_bounds}, we have
$H(P_i)=\exp(h_W(P_i))\leq \frac{\sqrt{D_E(t)}}{2(t+1)},$
 which gives $\alpha_i\leq\tfrac12\sqrt{D_E(t)}.$ Hence, we find that
$\alpha_1\alpha_2\leq\tfrac{1}{4}D_E(t),$
and so by Theorem~\ref{ThmQF}, $F_1$ and $F_2$ are inequivalent unless 
$\alpha_1=|A_1+tC_1^2|=|A_2+tC_2^2|=\alpha_2.$
However, by hypothesis $D_E(t)\leq t^2(t+1)^2,$ and so $|A_i|\leq H(P_i)\leq \frac{t}{2}.$ Since $C_i^2>0$, this means $\alpha_i=|A_i+tC_i^2|=A_i+tC_i^2.$ If $\alpha_1=\alpha_2$, then $A_1\equiv A_2\pmod{t},$ and by the bounds on $|A_i|$ we have that $A_1=A_2.$ This then implies that $C_1^2=C_2^2,$ and so $P_1=\pm P_2,$ which explains the further
factor of $1/2$ that appears in the lower bound.
This completes the proof.
\end{proof}

\begin{proof}[Proof of Theorem~\ref{General}]

We follow the proof of Theorem~\ref{Thm1}, noting instead that
$Q_D=(u,v)\in E_{-D}(\Q),$
with $u,v\in \Z,$ where
 $v\neq 0$, and $v$ is even if $-D$ is odd.
Arguing as before, we find that $F_1$ and $F_2$ are inequivalent unless 
$\frac{\alpha_1}{G_1}=\frac{|A_1-uC_1^2|}{G_1}=\frac{|A_2-uC_2^2|}{G_2}=\frac{\alpha_2}{G_2},$ and that
$|A_i|\leq H(P_i)\leq \frac{|u|}{2v^2}\leq \tfrac{1}{2}|u|$. Since $C_i^2>0$, this implies that $A_1-uC_1^2$ and $A_2-uC_2^2$ have the same signs. If $\tfrac{\alpha_1}{G_1}=\tfrac{\alpha_2}{G_2}$, then  
\[
A_1G_2- A_2G_1 = u(C_1^2G_2 - C_2^2G_1).
\]
The left hand side is divisible by, but not exceeding, $|u|$; and so it must be $0$. This implies 
$\frac{A_1}{G_1}=\frac{A_2}{G_2}$, and $\frac{C^2_1}{G_1}=\frac{C^2_2}{G_2}$, and so
$\frac{A_1}{C_1^2}=\frac{A^2}{C^2_2}.$ Hence, we have $P_1=\pm P_2,$ which explains the further
factor of $1/2$ that appears in the lower bound.
This completes the proof.

%
%

\end{proof}

\section{A nice family of elliptic curves}\label{family}

Theorem~\ref{Thm2} is a simple consequence of the following proposition.

\begin{proposition}\label{FamilyEC} If $a$ and $b$ are  positive integers, then the following are true:
\begin{enumerate}
\item If $a\gg_b 1$ (resp. $b\gg_a 1$), then $(0,b)$ and $(-a,b)$ are independent points in
$E_{a,b}(\Q)$.
\item If $a\gg_b 1$ (resp. $b\gg_a 1$), then $(0,b^3),$ $(-a,b^3),$ and $(-b^2,ab)$ are independent
points in $E_{a,b^3}(\Q)$.
\end{enumerate}
\end{proposition}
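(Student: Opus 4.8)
The plan is to reduce the problem to independence over a function field and then descend by Silverman's specialization theorem. Consider part~(1) in the regime $a\gg_b 1$. Fix $b$ and regard $E_{a,b}\colon y^2=x^3-a^2x+b^2=x(x-a)(x+a)+b^2$ as an elliptic curve over the rational function field $F=\overline{\Q}(a)$; because $j(E_{a,b})=1728\cdot 4a^6/(4a^6-27b^4)$ is non-constant, this is a non-isotrivial elliptic surface $\mathcal E\to\mathbb P^1_a$, and $P_1=(0,b)$, $P_2=(-a,b)$ define $F$-rational sections. (The three points $(0,b),(a,b),(-a,b)$ are the intersection of the line $y=b$ with the cubic, hence sum to $O$; this collinearity is exactly why one keeps only two of them.) It suffices to show $P_1,P_2$ are independent in $E_{a,b}(F)$ modulo torsion, since a $\Z$-linear relation among the specialized points $P_1(a_0),P_2(a_0)\in E_{a_0,b}(\Q)$ would, for all but finitely many $a_0$, pull back to the same relation among the sections. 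Thus independence over $F$ gives independence for all $a_0\gg_b 1$. The regime $b\gg_a 1$ is handled symmetrically over $\overline{\Q}(b)$ with $a$ held constant, and part~(2) is identical with the three sections $(0,b^3),(-a,b^3),(-b^2,ab)$.

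To establish independence over $F$ I would compute the geometric N\'eron--Tate height pairing on the Mordell--Weil lattice of $\mathcal E\to\mathbb P^1_a$ and show its Gram determinant is nonzero. First one checks that no section is torsion; for instance $2P_1$ has $x$-coordinate $a^4/(4b^2)$, which is non-constant in $a$, so $P_1$ has positive geometric height, and similar explicit doublings dispatch the other sections. Then one assembles the matrix $(\langle P_i,P_j\rangle)$ using Shioda's formula expressing each pairing through the intersection numbers of the sections with the zero section and correction terms from the reducible fibers. Here the determination of the singular fibers --- those lying over the roots of $27b^4-4a^6$ and over $a=\infty$, their Kodaira types, and which components each section meets --- is the heart of the matter, and nonvanishing of the resulting $2\times2$ (resp.\ $3\times3$) determinant is precisely independence.

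A more hands-on alternative, closer to the tone of Section~\ref{Heights}, is to estimate the canonical heights on $E_{a,b}/\Q$ directly as $a\to\infty$. The substitution $x=au$, $y=a^{3/2}w$ presents $E_{a,b}$ as a perturbation $w^2=u^3-u+b^2a^{-3}$ of the curve $w^2=u^3-u$, under which $P_1$ and $P_2$ limit to the distinct two-torsion points $(0,0)$ and $(-1,0)$. One shows $\widehat h(P_i)\sim\gamma_{ii}\log a$ and $\langle P_1,P_2\rangle\sim\gamma_{12}\log a$, extracting the leading coefficients $\gamma_{ij}$ from the archimedean local height together with the local heights at the primes dividing $\Delta_{a,b}\sim 64a^6$. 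Theorem~\ref{Silverman_bounds} is the right tool for isolating these leading terms, since $h_W(j(E_{a,b}))$ stays bounded while $h_W(\Delta_{a,b})\sim 6\log a$. It then suffices to verify that the leading matrix $(\gamma_{ij})$ is positive definite, whence the regulator is $\sim\det(\gamma_{ij})\,(\log a)^2>0$ for all large $a$.

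In either route the crux is identical: ruling out an arithmetic coincidence that would make the height regulator vanish. I expect this nondegeneracy --- equivalently the explicit fibral/local-height bookkeeping that computes the leading-order Gram matrix --- to be the main obstacle. Once the leading matrix is shown nonsingular, the descent to large $a$ (resp.\ large $b$) via specialization, the symmetry between the two parameters, and the passage from two to three points are all routine.
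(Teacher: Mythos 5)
Your strategy---view $a$ (resp.\ $b$) as an indeterminate, prove independence of the sections over the function field, and descend to all sufficiently large specializations via Silverman's specialization theorem---is exactly the route taken in the paper. In fact the paper's proof of Proposition~\ref{FamilyEC} is far terser than your sketch: it cites Theorem~C of \cite{SilvermanSpecialization} and asserts that the claims ``follow immediately'' by viewing $a$ and $b$ as indeterminates, without carrying out the function-field independence verification (the Shioda Gram-matrix or local-height computation) that you correctly flag as the real content, so your proposal matches the paper's approach and is, if anything, more explicit about the step both arguments leave to the reader.
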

\begin{proof}
This follows easily from Silverman's specialization theorem for elliptic curves. Suppose that  $E_t/\Q(t)$ is an elliptic curve which is not isomorphic over $\Q(t)$ to an elliptic curve defined over $\Q$. For $w\in \Q,$ we let $\sigma_w$ be the specialization map ($t\rightarrow w$):
\[
\sigma_w\,:\, E_t(\Q_t) \to E_w(\Q).
\]
Generally, $E_w$ is an elliptic curve over $\Q$.
Silverman's theorem (see Th. C of ~\cite{SilvermanSpecialization})  states, for all but finitely many $w\in \Q$, that $\sigma_w$
is an injective homomorphism between elliptic curves.
The claims follows immediately by viewing $a$ and $b$ as indeterminates respectively.

\end{proof}

\begin{proof}[Proof of Theorem~\ref{Thm2}]
In view of Proposition~\ref{FamilyEC} and Proposition~\ref{PropBoundsApprx}, the proof follows  the proof of Theorem~\ref{Thm1}
{\it mutatis mutandis}.
\end{proof}

\end{document}